\newcommand{\bburl}[1]{\textcolor{blue}{\url{#1}}}
\newcommand{\be}{\begin{equation}}
\newcommand{\ee}{\end{equation}}
\newcommand{\seqnum}[1]{\href{https://oeis.org/#1}{\rm \underline{#1}}}
\newtheorem{thm}{Theorem}[section]
\newtheorem{lem}[thm]{Lemma}
\newtheorem{exa}[thm]{Example}
\newtheorem{rek}[thm]{Remark}
\numberwithin{equation}{section}
\newcommand{\Mod}[1]{\ \mathrm{mod}\ #1}
\begin{document}

{\color{blue} }

\title[Schreier Sets and Tur\'{a}n Graphs]{On a Relation between Schreier-type sets and a Modification of Tur\'{a}n Graphs}

\author{H\`ung Vi\d{\^e}t Chu}

\email{\textcolor{blue}{\href{mailto:hungchu2@illinois.edu}{hungchu2@illinois.edu}}}
\address{Department of Mathematics, University of Illinois at Urbana-Champaign, Urbana, IL 61820, USA}

\subjclass[2020]{11Y55, 11B37}

\keywords{Tur\'{a}n graphs,  Schreier sets, arithmetic progression}

\maketitle

\begin{abstract} 
Recently, a relation between Schreier-type sets and Tur\'{a}n graphs was discovered. In this note, we give a combinatorial proof and obtain a generalization of the relation. Specifically, for $p, q\ge 1$, let $$\mathcal{A}_q := \{F\subset\mathbb{N}: |F| = 1 \mbox{ or }F\mbox{ is an arithmetic progression with difference } q\}$$
and 
$$Sr(n, p, q)\ :=\ \#\{F\subset \{1, \ldots, n\}\,:\, p\min F\ge |F|\mbox{ and }F\in \mathcal{A}_q\}.$$
We show that 
$$Sr(n, p, q) \ =\ T(n+1, pq+1, q),$$
where $T(\cdot, \cdot, \cdot)$ is the number of edges of a modification of Tur\'{a}n graphs. We also prove that $Sr(n,p,q)$ is the partial sum of certain sequences. 
\end{abstract}

\section{Introduction}

A set $F\subset \mathbb{N}$ is said to be $\textit{Schreier}$ if $\min F\ge |F|$. Schreier sets have appeared  frequently in Banach space theory. For example, Schreier sets were used in the construction by Schreier to answer a question of Banach and Saks \cite{S} and were the central concept in a celebrated theorem by Odell \cite{Od}. Besides, Schreier sets are of independent interests. 
A. Bird \cite{B} showed a surprising connection between these sets and the Fibonacci sequence. Since then, there has been research on generalizing Bird's result to connect Schreier-type sets with other sequences (see \cite{C1, C2, C3, M}.) For instance, \cite[Theorem 1.1]{C2} investigated the recurrence relation for the count of sets $F\subset\mathbb{N}$ satisfying $\min F\ge p|F|$, where $p\ge 1$. Recently, Beanland et al. \cite{BCF} proved a recurrence for the more general form $q\min F\ge p|F|$, where $p, q\ge 1$. In the same note, the authors showed a connection between Schreier-type sets and Tur\'{a}n graphs, which we now describe. 

For $n, p\ge 1$, let $[n] = \{1, 2, \ldots, n\}$ and 
$$Sr(n,p)\ :=\ \#\{F\subset [n]\,:\, p\min F\ge |F|\mbox{ and }F\mbox{ is an interval}\}.$$
A Tur\'{a}n graph \cite{T}, denoted by $T(n, p)$, is the $n$-vertex complete $p$-partite graph whose parts differ in size by at most $1$ vertex. In other words, to form $T(n, p)$, we divide the $n$ vertices into $p$ parts as equally as possible then connect all pairs of vertices that are not in the same part. We also use $T(n,p)$ to indicate the number of edges of the Tur\'{a}n graph $T(n,p)$. For each $p\ge 2$, the sequence $(T(n,p))_{n=1}^\infty$ is available in OEIS \cite{Sln} (for example, see
\seqnum{A002620},
\seqnum{A000212},
\seqnum{A033436}, and
\seqnum{A033437}.)

By \cite[Theorem 1.2]{BCF}, we have
\begin{equation}\label{e1}Sr(n,p)\ =\ T(n+1, p+1).\end{equation}
In the proof of \cite[Theorem 1.2]{BCF}, the authors explicitly found the formulas for $Sr(n,p)$ and $T(n+1, p+1)$ then by algebraic manipulations, showed that the two formulas give the same number for all $n, p\ge 1$. 
The first goal of this note is to give a combinatorial proof of \eqref{e1}, which is concise and sheds a better light on why the equality holds. 

Our second goal is to generalize \eqref{e1}. To do so, we need to introduce some new notion. 
\subsection{Three types of graphs}
We introduce several types of graphs that are recursively defined. 

\subsubsection{An $M(n,p)$-graph}
In a Tur\'{a}n graph, each vertex in a particular part shares an edge only with vertices in other parts. We shall present a slight modification that allows a vertex to share an edge with vertices in the same part. Let $M(n,p)$ denote the following modification of $T(n,p)$. Fix $p\ge 1$. Let $M(1, p)$ be the graph with $p$ parts, exactly one of which contains a vertex. Suppose that $M(n,p)$ has been defined for some $n\ge 1$. We construct $M(n+1, p)$ by adding a vertex $v$ to $M(n,p)$ under a certain rule. Write $n = p\ell+k$, where $\ell\ge 0$ and $1\le k\le p$. 
\begin{itemize}
    \item Case 1: If $k = p$, then add $v$ to one of the $p$ parts. From the remaining $p-1$ parts, choose an arbitrary part $P$. Form edges between $v$ and all vertices in the same part as $v$, and form edges between $v$ and vertices in all other parts except $P$. 
    \item Case 2: If $k < p$, then add $v$ to a part $P'$ with exactly $\ell$ vertices. Form an edge between $v$ and every vertex in every part other than $P'$. 
\end{itemize}
With an abuse of notation, let $M(n,p)$ also denote the number of edges of $M(n,p)$. Clearly, $M(n,p) = T(n,p)$ for all $n, p\ge 1$.

\subsubsection{An $M(n,p,q)$-graph}

For fixed $p, q\ge 1$, we now define a graph $M(n,p,q)$, where the number of edges of $M(n,p,1)$ and $M(n,p)$ are equal. Let $M(1, p, q)$ be the graph with $p$ parts, exactly one of which contains a vertex. Suppose that $M(n,p, q)$ has been defined for some $n\ge 1$. We construct $M(n+1, p, q)$ by adding a vertex $v$ to $M(n,p, q)$ under the following rule. Write $n = p\ell + k$, where $\ell\ge 0$ and $1\le k\le p$. There are $k$ parts with $\ell+1$ vertices and $p-k$ parts with $\ell$ vertices. 
\begin{itemize}
    \item Case 1: If $p = k$, then form $M(n+1, p, q)$ in the same way as we form $M(n+1,p)$ above.
    \item Case 2: If $p-q< k < p$, then add $v$ to a part with exactly $\ell$ vertices. Let $P$ be a part with $\ell+1$ vertices. Form edges between $v$ and all vertices in the same part as $v$, and form edges between $v$ and vertices in all other parts except $P$.
    \item Case 3: If $k \le p-q$, then add $v$ to a part $P'$ with exactly $\ell$ vertices. Form an edge between $v$ and every vertex in every part other than $P'$.
\end{itemize}
By definition, $M(n, p, 1) = M(n,p)$ for all $n,p\ge 1$.

\subsubsection{A $T(n, p, q)$-graph}
A $T(n,p,q)$-graph is formed in almost the same way as an $M(n,p,q)$-graph. Fix $p, q\ge 1$. Let us describe how to form $T(n,p,q)$. First, $T(1, p, q) = M(1, p, q)$. Supposing that we already have $T(n, p, q)$ for some $n\ge 1$, we construct $T(n+1, p, q)$.
In each case of the recursive step to form $M(n+1, p, q)$ described above, we connect $v$ to certain vertices. Call the set of these vertices $V$. We assign numbers from $1$ to $q$ in that order to vertices in $V$. (If $|V| > q$, we repeat the numbering from $1$ to $q$.) In forming $T(n+1, p, q)$ from $T(n, p, q)$, we form an edge between $v$ and vertices that are numbered $1$. See the Appendix for $(T(n, 5, 2))_{n=2}^{7}$.

\subsection{Schreier sets with constant gaps}

Recall that an arithmetic progression of positive integers is a sequence that can be written as $$a, a+d, a+2d, \ldots, a+kd, \ldots,$$
for some $a, d\in \mathbb{N}$. Here $d$ is called the difference of the arithmetic progression. An interval is an arithmetic progression with difference $1$. For $q\ge 1$, let $$\mathcal{A}_q := \{F\subset\mathbb{N}: |F| = 1 \mbox{ or }F\mbox{ is an arithmetic progression with difference } q\}.$$
For $n, p, q\ge 1$, define
$$Sr(n, p, q)\ :=\ \#\{F\subset [n]\,:\, p\min F\ge |F|\mbox{ and }F\in \mathcal{A}_q\}.$$

We are ready to state the second result. 

\begin{thm}\label{m1}
For all $p, q, n\ge 1$, we have
\begin{equation}\label{e10}Sr(n, p, q) \ =\ T(n+1, pq+1, q).\end{equation}
\end{thm}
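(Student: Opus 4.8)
The plan is to prove \eqref{e10} by induction on $n$, matching the two sides increment by increment. Both quantities are assembled one element (resp.\ one vertex) at a time, so it suffices to verify agreement at $n=1$ together with the step identity
\[
Sr(n+1,p,q)-Sr(n,p,q)\;=\;T(n+2,pq+1,q)-T(n+1,pq+1,q),\qquad n\ge1.
\]
For $n=1$ the only admissible set is $\{1\}$, giving $Sr(1,p,q)=1$; and building $T(2,pq+1,q)$ from $M(1,pq+1,q)$ falls under Case~$3$ with neighbourhood of size $|V|=1$, producing one edge, so $T(2,pq+1,q)=1$.

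For the left-hand side, $Sr(n+1,p,q)-Sr(n,p,q)$ counts the admissible $F$ with $\max F=n+1$. Such an $F$ is the unique difference-$q$ progression with top term $n+1$ and some length $s\ge1$, so it is pinned down by its least element $m=n+1-(s-1)q$; membership in $\mathcal A_q$ is automatic, the condition $p\min F\ge|F|$ reads $pm\ge s$, equivalently $(pq+1)m\ge (n+1)+q$, and this already forces $m\ge1$ (hence $F\subseteq[n+1]$) for free, since $(pq+1)m\ge(n+1)+q>0$. Counting the integers $m\equiv n+1\pmod{q}$ with $m\le n+1$ meeting $(pq+1)m\ge(n+1)+q$ then yields the clean expression
\[
Sr(n+1,p,q)-Sr(n,p,q)\;=\;\Big\lfloor\frac{p(n+1)-1}{pq+1}\Big\rfloor+1 .
\]

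For the right-hand side, the $T$-rule joins the incoming vertex only to the elements of its prescribed neighbourhood $V$ that are labelled $1$ in the cyclic numbering $1,2,\dots,q$, so the number of new edges is exactly $\lceil|V|/q\rceil$, with $V$ taken from the $M(n+1,pq+1,q)\to M(n+2,pq+1,q)$ step. Writing $n+1=(pq+1)\ell+k$ with $1\le k\le pq+1$ and noting that the new vertex misses exactly one part, I would read off $|V|=pq(\ell+1)$ in Case~$1$ ($k=pq+1$), $|V|=pq\ell+k-1$ in Case~$2$ ($q(p-1)+1<k<pq+1$), and $|V|=pq\ell+k$ in Case~$3$ ($k\le q(p-1)+1$); hence the increment equals $p(\ell+1)$, then $p\ell+\lceil(k-1)/q\rceil$, and $p\ell+\lceil k/q\rceil$, respectively.

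Finally I would substitute $n+1=(pq+1)\ell+k$ into the displayed formula, reducing the step identity to the elementary floor identities $\lfloor(pk-1)/(pq+1)\rfloor=\lfloor(k-1)/q\rfloor$ on $1\le k\le q(p-1)+1$, $\lfloor(pk-1)/(pq+1)\rfloor=\lfloor(k-2)/q\rfloor$ on $q(p-1)+1<k\le pq$, and $\lfloor(pk-1)/(pq+1)\rfloor=p-1$ at $k=pq+1$, each proved by locating $pk-1$ between consecutive multiples of $pq+1$. I expect the main difficulty to be bookkeeping rather than conceptual: pinning down $|V|$ precisely in all three construction cases and checking that the case boundaries (especially $k=pq+1$ and the threshold $q(p-1)+1$) align with the three floor identities. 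A fully combinatorial alternative would send the admissible set with least element $m$ and maximum $n+1$ to the edge from vertex $m$ to the freshly added vertex, but confirming that the cyclic $q$-numbering picks out exactly these endpoints needs the same case-by-case tracking.
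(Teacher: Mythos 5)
Your proposal is correct and follows essentially the same route as the paper: establish the base case $n=1$ and match the increments $Sr(n+1,p,q)-Sr(n,p,q)$ and $T(n+2,pq+1,q)-T(n+1,pq+1,q)$ via the same three-case analysis on $k$ and equivalent floor identities (your $\lceil |V|/q\rceil$ is the paper's $\lfloor(|V|-1)/q\rfloor+1$ from Lemma~\ref{l1}, and your identity on $1\le k\le q(p-1)+1$ is Lemma~\ref{l2} after shifting $k$ by one). The one notable refinement is your direct derivation of the left-hand increment as $\left\lfloor\frac{p(n+1)-1}{pq+1}\right\rfloor+1$ from the single inequality $(pq+1)m\ge n+1+q$, which is cleaner than the paper's nested floor-of-ceiling expression and in fact yields Theorem~\ref{m2} immediately.
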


\begin{rek}\normalfont
Plugging in $q = 1$ into \eqref{e10}, we have \eqref{e1}. Indeed, we have $Sr(n, p, 1) = Sr(n, p)$ and by definitions, 
$$T(n+1, p+1, 1) \ =\ M(n+1, p+1, 1)\ =\ M(n+1, p+1) \ =\ T(n+1, p+1).$$
\end{rek}

\begin{exa}\normalfont
Choose $p = q = 2$. Let us consider the first few terms of two sequences $(Sr(n, 2, 2))_{n=1}^\infty$ and $(T(n+1, 5, 2))_{n=1}^{\infty}$ and confirm that these values are equal. A simple program gives $(Sr(n, 2, 2))$:
$$1, 2, 4, 6, 8, 11, 14, 18, 22, 26, 31, 36, 42, 48, 54, 61, 68, 76, 84, \ldots.$$
We include graphs $T(n,5,2)$ for small values of $n$ in the Appendix. 
\end{exa}

Our last result involves writing $Sr(n,p,q)$ as partial sums of certain sequences. In particular,
\begin{thm}\label{m2}
Fix $p, q\ge 1$. We have
$$Sr(1, p, q) \ =\ 1\mbox{ and }Sr(n+1, p, q) - Sr(n, p, q) \ =\ \left\lfloor\frac{p(n+q+1)}{pq+1}\right\rfloor, \forall n\ge 1.$$
As a result,
$$Sr(N, p, q)\ =\ 1 + \sum_{n=1}^{N-1}\left\lfloor\frac{p(n+q+1)}{pq+1}\right\rfloor.$$
\end{thm}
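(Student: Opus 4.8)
The plan is to prove the recurrence directly and combinatorially, without appealing to Theorem \ref{m1}, and then deduce the summation formula by telescoping. First I would interpret the difference $Sr(n+1,p,q)-Sr(n,p,q)$ as a count. Since every $F\subset[n]$ counted by $Sr(n,p,q)$ is also counted by $Sr(n+1,p,q)$ (the conditions $F\in\mathcal{A}_q$ and $p\min F\ge|F|$ do not depend on the ambient interval), the difference counts exactly those $F\in\mathcal{A}_q$ with $F\subset[n+1]$, $\max F=n+1$, and $p\min F\ge|F|$. The base case $Sr(1,p,q)=1$ is immediate, as the only subset of $[1]$ is $\{1\}$, which lies in $\mathcal{A}_q$ and satisfies $p\cdot 1\ge 1$.

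Next I would parametrize the sets contributing to the difference by their size $m=|F|\ge 1$. A set $F\in\mathcal{A}_q$ with $\max F=n+1$ and $|F|=m$ is forced to be $F=\{\,n+1-(m-1)q,\ n+1-(m-2)q,\ \ldots,\ n+1\,\}$, so that $\min F=n+1-(m-1)q$; this simultaneously covers the singleton case $m=1$. Thus the difference equals the number of positive integers $m$ for which this $F$ is both a subset of $[n+1]$ (equivalently $\min F\ge 1$) and satisfies the Schreier-type inequality $p\min F\ge m$. A direct manipulation of $p\bigl(n+1-(m-1)q\bigr)\ge m$ rearranges to $m\le \frac{p(n+q+1)}{pq+1}$, an equivalence valid for every $m\ge 1$.

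The step I expect to be the main obstacle---or rather the one needing the most care---is verifying that the containment constraint $\min F\ge 1$ is automatically implied by the Schreier-type inequality, so that we are left counting a single condition rather than an awkward minimum of two floor functions. The point is an integrality argument: if $p\min F\ge m\ge 1$, then $\min F\ge 1/p>0$, and since $\min F=n+1-(m-1)q$ is an integer, this already forces $\min F\ge 1$. Hence the number of admissible $m$ is precisely the number of positive integers not exceeding $\frac{p(n+q+1)}{pq+1}$, namely $\left\lfloor\frac{p(n+q+1)}{pq+1}\right\rfloor$, which establishes the recurrence.

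Finally, the closed form follows by summing the telescoping identity: writing $Sr(N,p,q)=Sr(1,p,q)+\sum_{n=1}^{N-1}\bigl(Sr(n+1,p,q)-Sr(n,p,q)\bigr)$ and substituting the base value $1$ together with the just-proved expression for each difference yields $Sr(N,p,q)=1+\sum_{n=1}^{N-1}\left\lfloor\frac{p(n+q+1)}{pq+1}\right\rfloor$, as claimed.
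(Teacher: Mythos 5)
Your proposal is correct, and it takes a genuinely different route from the paper. The paper does not recount anything: it reuses the identity (labelled \eqref{e14}) already derived inside the proof of Theorem \ref{m1}, which expresses $Sr(n+1,p,q)-Sr(n,p,q)$ by a case split on $k$ where $n=(pq+1)\ell+k$, and then shows that this two-case expression collapses to $\left\lfloor p(n+q+1)/(pq+1)\right\rfloor$ via a substitution and a separate floor-function identity (Lemma \ref{l2}, $\lfloor k/q\rfloor=\lfloor (pk+p-1)/(pq+1)\rfloor$ for $k\le (p-1)q$), plus a direct check in the remaining range of $k$. You instead go back to the combinatorics: the difference counts sets in $\mathcal{A}_q$ with $\max F=n+1$, each size $m$ determines the set uniquely, the Schreier condition $p\bigl(n+1-(m-1)q\bigr)\ge m$ is equivalent to $m\le p(n+q+1)/(pq+1)$, and the containment constraint is absorbed by your integrality observation, so the count is exactly the floor. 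Your argument is self-contained (it does not depend on the computation inside the proof of Theorem \ref{m1}), avoids the case analysis and Lemma \ref{l2} entirely, and explains conceptually why this particular floor appears: it is the largest admissible progression length. What the paper's route buys in exchange is economy given its context --- having already computed \eqref{e14} for Theorem \ref{m1}, it only needs an arithmetic reconciliation of two floor expressions. All the steps in your version check out, including the rearrangement of the inequality and the telescoping at the end.
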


\section{A combinatorial view of \eqref{e1}}
Let $p\ge 1$ be fixed. Our goal is to show that $Sr(n, p) = T(n+1, p+1)$ for all $n\ge 1$. The proof idea is simple: as we know $Sr(1, p) = T(2, p+1)$ from the definitions, we need only to verify that 
\begin{equation}\label{e2}Sr(n+1, p) - Sr(n, p) \ =\ T(n+2, p+1) - T(n+1, p+1),\forall n\ge 1.\end{equation}
Fix $n\ge 1$. Write $n = (p+1)\ell + k$ for some $\ell\ge 0$ and $0\le k\le p$.

By definition, the left side of \eqref{e2} counts the number of intervals $F\subset [n+1]$ such that 
$p\min F\ge |F|$ and $n+1\in F$. Let $\mathcal{A}$ be the set of all these intervals and $M$ be the largest interval in $\mathcal{A}$. Then 
$$p\min M\ \ge\ |M| \ =\  n+2-\min M.$$
Hence, 
$$\min M \ =\ \left\lceil \frac{n+2}{p+1}\right\rceil\mbox{ and }M \ =\ \left\{\left\lceil \frac{n+2}{p+1}\right\rceil, \ldots, n+1\right\}.$$
Since each $F\in \mathcal{A}$ is obtained from $M$ by discarding the smallest numbers in $M$, it follows that
\begin{align}\label{e11}Sr(n+1, p) - Sr(n, p)\ =\ n+2-\left\lceil \frac{n+2}{p+1}\right\rceil&\ =\ n-\ell + 2-\left\lceil\frac{k+2}{p+1}\right\rceil\nonumber\\
&\ =\ \begin{cases}n-\ell &\mbox{ if }k = p\\
n-\ell+1 &\mbox{ if } k < p.\end{cases}
\end{align}

We now consider the right side of \eqref{e2}. For the graph $T(n+1, p+1)$, there are $k+1$ parts that have $\ell+1$ vertices and $(p-k)$ parts that have $\ell$ vertices. If $k = p$, then adding one more vertex to any part increases the number of edges by $$(\ell+1)k \ =\ (\ell+1)p \ =\ p\ell + k\ =\  n-\ell.$$
If $k < p$, then adding one more vertex to a part with $\ell$ vertices increases the number of edges by 
$$\ell(p-k-1) + (\ell+1)(k+1)\ =\ n-\ell + 1.$$
Therefore, we have $Sr(n+1, p) - Sr(n, p) = T(n+2, p+1) - T(n+1, p+1)$, as desired. 

\section{Proof of Theorem \ref{m1}}
First, we need an easy lemma.

\begin{lem}\label{l1}
Given a set $V$ of $N$ vertices and a vertex $v\notin V$, if we number vertices in $V$ from $1$ to $q$ in that order and form an edge between $v$ and each vertex that is numbered $1$, then the number of edges is 
$$\left\lfloor \frac{N-1}{q}\right\rfloor + 1.$$
\end{lem}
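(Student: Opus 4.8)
The plan is to reduce the statement to a single count of residues. Since an edge is formed between $v$ and each vertex of $V$ that receives the label $1$, the number of edges is exactly the number of vertices in $V$ labeled $1$. So the whole lemma amounts to counting how many of the $N$ vertices get the label $1$ under the prescribed numbering.

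First I would fix the bookkeeping. Listing the vertices of $V$ in the order they are numbered, say the $i$-th vertex for $1 \le i \le N$, the cyclic numbering $1, 2, \ldots, q, 1, 2, \ldots, q, \ldots$ assigns the $i$-th vertex the label $((i-1) \bmod q) + 1$. Hence the $i$-th vertex is labeled $1$ precisely when $i \equiv 1 \pmod{q}$, i.e.\ when $i = 1 + jq$ for some integer $j \ge 0$.

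Next I would count these indices inside $\{1, \ldots, N\}$. The condition $1 + jq \le N$ is equivalent to $jq \le N-1$, i.e.\ $j \le (N-1)/q$. Combined with $j \ge 0$, the admissible values of $j$ are exactly $0, 1, \ldots, \lfloor (N-1)/q \rfloor$, of which there are $\lfloor (N-1)/q \rfloor + 1$. This is the claimed number of edges.

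There is essentially no obstacle here; the only point worth a sentence is the degenerate regime $N \le q$, where only the first vertex is labeled $1$: then $\lfloor (N-1)/q \rfloor + 1 = 0 + 1 = 1$, matching the count directly and confirming the formula holds for every $N, q \ge 1$.
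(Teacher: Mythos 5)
Your proof is correct and follows essentially the same route as the paper, which also reduces the count to the number of integers in $\{1,\dots,N\}$ congruent to $1 \pmod q$; you simply make the indexing and the bound $1+jq\le N$ explicit where the paper states the count without detail.
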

\begin{proof}
We can rephrase the problem as follows: given $N$ integers, find the maximum possible number of integers that are $1\Mod q$. The count is $\left\lfloor \frac{N-1}{q}\right\rfloor + 1$.
\end{proof}

\begin{proof}[Proof of Theorem \ref{m1}]
Fix $p, q, n\ge 1$. Write $n = (pq+1)\ell + k$ for some $\ell\ge 0$ and $0\le k\le pq$. 
Clearly, $Sr(1, p, q) \ =\ T(2, pq+1, q) = 1$. We need only to show that 
$$Sr(n+1, p, q) - Sr(n, p, q) \ =\ T(n+2, pq+1, q) - T(n+1, pq+1, q).$$

We have $Sr(n+1, p, q) - Sr(n, p, q)$ counts the number of sets $F\subset [n+1]$ with $\max F = n+1$, $p\min F\ge |F|$, and $F\in \mathcal{A}_q$. Let $\mathcal{A}$ be the set of all such $F$ and $M$ be the largest set in $\mathcal{A}$. Then 
$$p\min M\ \ge\ |M| \ =\  \frac{n+1-\min M}{q}+1$$
and so, $$\min M\ \ge\ \left\lceil\frac{n+1+q}{pq+1}\right\rceil.$$ 
If we write $M = \{n+1-(\ell-1) q,\ldots, n+1-q, n+1\}$, then 
$$n+1-(\ell-1)q \ \ge\ \left\lceil\frac{n+1+q}{pq+1}\right\rceil,$$
which gives 
$$\ell \ \le\ \left\lfloor \frac{1}{q}\left(n+1-\left\lceil\frac{n+1+q}{pq+1}\right\rceil\right)\right\rfloor+1.$$
Because $M$ is the largest, 
$$|M| \ =\ \ell \ =\ \left\lfloor \frac{1}{q}\left(n+1-\left\lceil\frac{n+1+q}{pq+1}\right\rceil\right)\right\rfloor+1.$$
Since each $F\in \mathcal{A}$ is obtained from $M$ by discarding the smallest numbers in $M$, it follows that
\begin{align}\label{e14}
    Sr(n+1, p, q) - Sr(n, p, q) &\ =\ \left\lfloor \frac{1}{q}\left(n+1-\left\lceil\frac{n+1+q}{pq+1}\right\rceil\right)\right\rfloor+1\nonumber\\
    &\ =\ \left\lfloor \frac{1}{q} \left(n-\ell+1-\left\lceil\frac{k+1+q}{pq+1}\right\rceil\right)\right\rfloor + 1\nonumber \\
    &\ =\ \begin{cases} \left\lfloor\frac{n-\ell -1 }{q}\right\rfloor + 1 &\mbox{ if } (p-1)q < k \le pq\\ \left\lfloor\frac{n-\ell}{q}\right\rfloor + 1&\mbox{ if }k \le  (p-1)q.\end{cases}
\end{align}

We now evaluate $T(n+2, pq+1, q) - T(n+1, pq+1, q)$. Again, $n+1 = (pq+1)\ell + (k+1)$, where $1\le k+1\le pq + 1$.
For the graph $T(n+1, pq+1, q)$, there are $k+1$ parts that have $\ell+1$ vertices and $(pq-k)$ parts that have $\ell$ vertices.

\begin{itemize}
    \item Case 1: If $k = pq$, then the new vertex in forming $T(n+2, pq+1, q)$ can be added to any of the $pq+1$ parts. By the construction of $T(n+2, pq+1, q)$ from $T(n+1, pq+1, q)$ and Lemma \ref{l1}, the number of new edges is 
    $$\left\lfloor\frac{pq(\ell+1)-1}{q}\right\rfloor + 1\ =\ \left\lfloor\frac{n-\ell-1}{q}\right\rfloor + 1.$$
    \item Case 2: If $(p-1)q < k < pq$, then the new vertex in forming $T(n+2, pq+1, q)$ must be added to one of the $(pq-k)$ parts that have $\ell$ vertices. 
    By the construction of $T(n+2, pq+1, q)$ from $T(n+1, pq+1, q)$ and Lemma \ref{l1}, the number of new edges is
    $$\left\lfloor\frac{\ell(pq-k)+(\ell+1)k-1}{q}\right\rfloor + 1\ =\ \left\lfloor\frac{n-\ell-1}{q}\right\rfloor + 1.$$
    \item Case 3: If $k\le (p-1)q$, then the new vertex in forming $T(n+2, pq+1, q)$ must be added to one of the $(pq-k)$ parts that have $\ell$ vertices. By the construction of $T(n+2, pq+1, q)$ from $T(n+1, pq+1, q)$ and Lemma \ref{l1}, the number of new edges is
    $$\left\lfloor\frac{\ell(pq-k-1)+(\ell+1)(k+1)-1}{q}\right\rfloor + 1\ =\ \left\lfloor\frac{n-\ell}{q}\right\rfloor + 1.$$
\end{itemize}
Therefore, we have $Sr(n+1, p, q) - Sr(n, p, q) = T(n+2, pq+1, q) - T(n+1, pq+1, q)$, as desired. 
\end{proof}

\section{Proof of Theorem \ref{m2}}
Fix $p, q\ge 1$.
We need the following lemma. 
\begin{lem}\label{l2}
For $k\ge 1$ and $k\le (p-1)q$, it holds that
$$\left\lfloor \frac{k}{q}\right\rfloor\ =\ \left\lfloor\frac{pk + p - 1}{pq+1}\right\rfloor.$$
\end{lem}
\begin{proof}
Write $k = qs + t$, where $s\ge 0$ and $0\le t < q$. Since $k \le (p-1)q$, we know that $s\le p-1$. Clearly, $\left\lfloor \frac{k}{q}\right\rfloor = s$. Also,
$$\left\lfloor\frac{pk + p - 1}{pq+1}\right\rfloor\ =\ \left\lfloor\frac{(1+pq)s + (p-1) -s + pt }{pq+1}\right\rfloor\ =\ s + \left\lfloor\frac{(p-1) -s + pt }{pq+1}\right\rfloor\ =\ s,$$
where the last equality is due to the fact that
$(p-1)-s + pt < pq + 1$, which is equivalent to 
$-s < p(q-t-1) + 2$.
\end{proof}

We want to show that
$$Sr(n+1, p, q) - Sr(n, p, q) \ =\ \left\lfloor \frac{p(n+q+1)}{pq+1}\right\rfloor, \forall n\ge 1.$$
By \eqref{e14}, we have
\begin{align*}
    Sr(n+1, p, q)- Sr(n, p, q) \ =\ \begin{cases} \left\lfloor\frac{n-\ell -1 }{q}\right\rfloor + 1 &\mbox{ if } (p-1)q < k \le pq\\ \left\lfloor\frac{n-\ell}{q}\right\rfloor + 1&\mbox{ if }k \le  (p-1)q,\end{cases}
\end{align*}
where $n = (pq+1)\ell + k$ for some $\ell \ge 0$ and $0\le k\le pq$. Substituting $n = (pq+1)\ell+k$, we obtain
\begin{equation}\label{e20}
Sr(n+1, p, q)- Sr(n, p, q)\ =\ \begin{cases} p\ell + \left\lfloor\frac{k -1 }{q}\right\rfloor + 1 &\mbox{ if } (p-1)q < k \le pq\\ p\ell + \left\lfloor\frac{k}{q}\right\rfloor + 1&\mbox{ if }k \le  (p-1)q.\end{cases}
\end{equation}
On the other hand,
\begin{equation}\label{e21}
\left\lfloor \frac{p(n+q+1)}{pq+1}\right\rfloor\ =\ \left\lfloor \frac{p(((pq+1)\ell+k)+q+1)}{pq+1}\right\rfloor\ =\ p\ell + 1 + \left\lfloor \frac{pk + p - 1}{pq+1}\right\rfloor.
\end{equation}
By \eqref{e20} and \eqref{e21}, we need only to confirm that 
$$\left\lfloor \frac{pk + p - 1}{pq+1}\right\rfloor\ =\ \begin{cases} \left\lfloor\frac{k -1 }{q}\right\rfloor &\mbox{ if } (p-1)q < k \le pq\\  \left\lfloor\frac{k}{q}\right\rfloor &\mbox{ if }k \le  (p-1)q.\end{cases}.$$
If $(p-1)q < k\le pq$, it is easy to see that $\left\lfloor \frac{pk + p - 1}{pq+1}\right\rfloor = \left\lfloor\frac{k -1 }{q}\right\rfloor = p-1$. If $k\le (p-1)q$, then we Lemma \ref{l2} and we are done. 

\section{Appendix}
The following are $T(n, 5, 2)$ for $2\le n\le 7$.

\begin{figure}[H]
\centering
\includegraphics[scale=1.3]{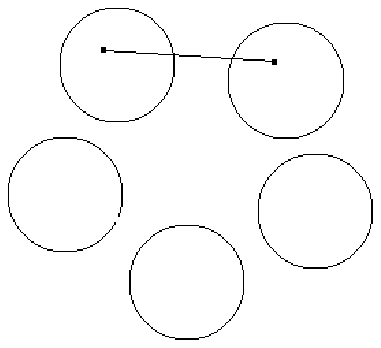}
\end{figure}

\begin{figure}[H]
\centering
\includegraphics[scale=1.3]{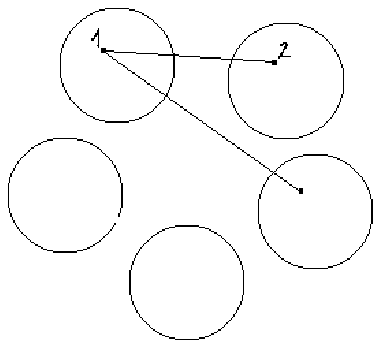}
\end{figure}

\begin{figure}[H]
\centering
\includegraphics[scale=1.3]{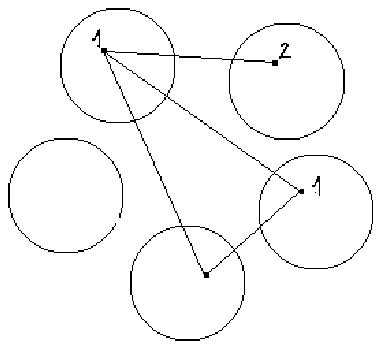}
\end{figure}

\begin{figure}[H]
\centering
\includegraphics[scale=1.1]{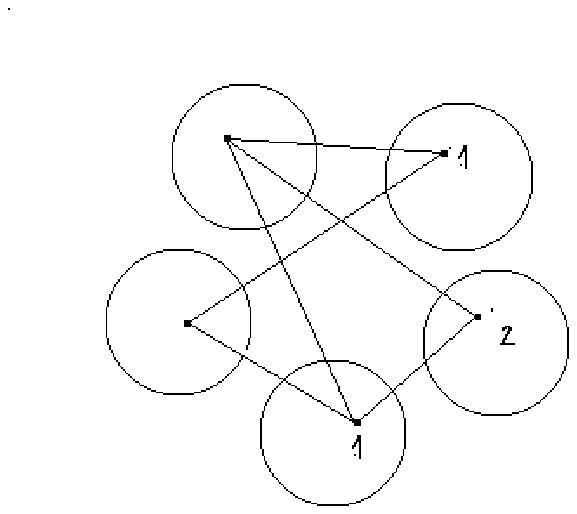}
\end{figure}

\begin{figure}[H]
\begin{center}
\includegraphics[scale=0.6]{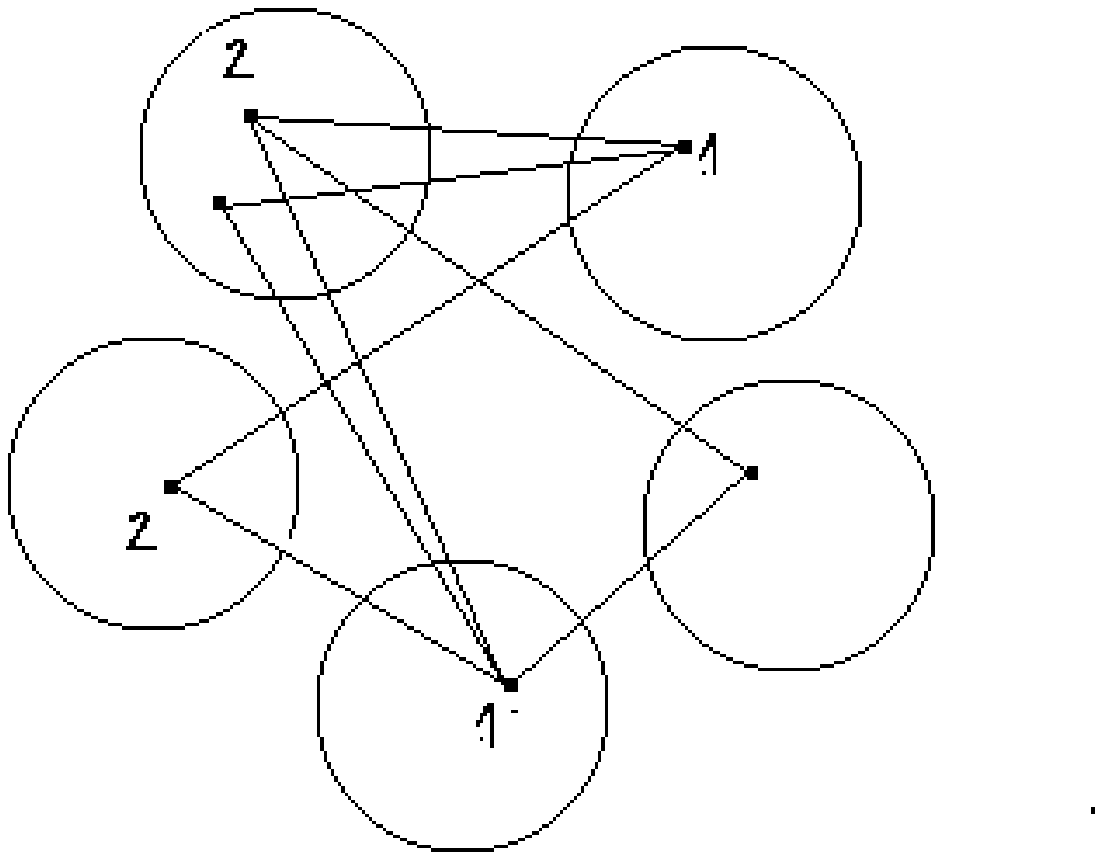}
\end{center}
\end{figure}

\begin{figure}[H]
\centering
\includegraphics[scale=1.1]{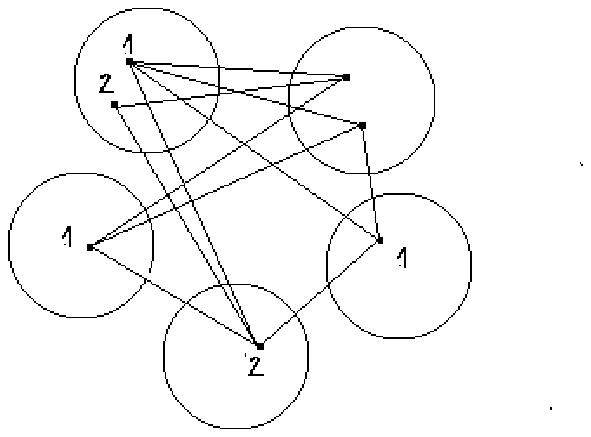}
\end{figure}


\ \\

\begin{thebibliography}{9999}

\bibitem{BCF} K. Beanland, H. V. Chu, and C. E. Finch-Smith, Schreier sets, linear recurrences, and Turán sequences, to appear in \textit{Fibonacci Quart.} 

\bibitem{B} A. Bird, Schreier sets and the Fibonacci sequence, 

\url{https://outofthenormmaths.wordpress.com/
2012/05/13/jozef-schreier-schreier-sets-and-the-fibonacci-sequence/}.
\bibitem{C1} H. V. Chu, The Fibonacci sequence and Schreier-Zeckendorf sets, \textit{J. Integer Seq.} \textbf{22} (2019).
\bibitem{C2} H. V. Chu, S. J. Miller, and Z. Xiang, Higher order Fibonacci sequences from generalized Schreier sets, \textit{Fibonacci Quart.} \textbf{58} (2020), 249--253.
\bibitem{C3} H. V. Chu, Partial sums of the Fibonacci sequence,  \textit{Fibonacci Quart.} \textbf{59} (2021), 132--135.
\bibitem{M} P. J. Mahanta, Partial sums of the Gibonacci sequence, preprint. Available at: \url{https://arxiv.org/pdf/2109.03534.pdf}.
\bibitem{Od}
E. Odell, On schreier unconditional sequences, \textit{Contemp. Math.} \textbf{144} (1993),
197--201.
\bibitem{S} J. Schreier, Ein gegenbeispiel zur theorie der schwachen konvergentz, \textit{Studia Math.} \textbf{2}
(1962), 58--62.
\bibitem{Sln}  N. J. A. Sloane et al. (2021). \textit{The On-Line Encyclopedia of Integer Sequences}, Available at \url{https://
oeis.org}.
\bibitem{T} P. Tur\'{a}n, Eine extremalaufgabe aus der graphentheorie, \textit{Mat. Fiz Lapook} \textbf{48} (1941), 436--452.



\end{thebibliography}
\end{document}